\numberwithin{equation}{section}
\newtheorem{theorem}{Theorem}[section]
\newtheorem{lemma}{Lemma}[section]
\newtheorem{corollary}{Corollary}[section]
\newtheorem{remark}{Remark}[section]
\newtheorem{definition}{Definition}[section]
\theoremstyle{definition}
\newtheorem{example}{Example}
\begin{document}
\bibliographystyle{amsplain}
\title{{{
Stability of Cobweb economic model involving Hilfer fractional derivative}}}
\author{Divya Raghavan*
}
\address{
Department of Mathematics,
Indian Institute of Technology Roorkee,
Uttarakhand, India
}
\email{divyar@ma.iitr.ac.in,madhanginathan@gmail.com}
\author{
N. Sukavanam
}
\address{
Department of  Mathematics  \\
Indian Institute of Technology Roorkee,
Uttarakhand,  India
}
\email{n.sukavanam@ma.iitr.ac.in}
\bigskip
\begin{abstract}
This paper evaluates the solution of cobweb models when there is a Hilfer fractional derivative in the demand and supply function. Particular cases when Hilfer derivative reducing to Caputo and Riemann-Liouville derivative are discussed. Subsequently, the solution of the cobweb model with Riemann-Liouville derivative is derived instantly. Two numerical examples interpreting the crafted theory is deliberated with illustrations and comparisons. Conclusions based on the graphical illustrations is outlined in detail. These illustrations highlight the advantage of the Hilfer fractional derivative over the other two fractional derivatives.
\end{abstract}
\subjclass[2010]{34D20; 34A08; 33E12}
\keywords{Cobweb model; Hilfer fractional derivative; Mittag Leffler function}
\maketitle
\pagestyle{myheadings}
\markboth
{Divya Raghavan and N. Sukavanam}
{Stability of economic Cobweb model involving Hilfer fractional derivative}
\section{Prologue}
There is always a bonding between economics and mathematics. The study of models in economics using math principles to carry out quantifiable tests is termed as Mathematical economics. Economics basically deals with the production of goods, stocking the goods, distribution of stock and utilization of goods by costumer. The equilibrium of price between demand and supply for non-storable goods are generally described using cobweb model. Primarily, the term cobweb model was used typically in the field of economics. But later due to the similarities between the cobweb model and dynamical systems in terms of stability conditions, the contribution of mathematicians in the cobweb theory got strengthened. Despite the fact that cobweb theory was developed in the 18th Century, Kaldar, an economist in his research article \cite{Cobweb-name} gave the specific name "Cobweb theorem", wherein he gave the theoretical interpretation of the cobweb theory. However, Ezekiel \cite{Cobweb-theorem} reformulated the classical theory to a neo-classical economic theory that is related to the price analysis of statistics. Ezekiel also claimed that not all commodity cycles are cobwebs and discussed cobweb theory in three cases - when there are continuous fluctuations, divergent fluctuations and convergent fluctuations of supply and demand. The limitations of the cobweb theory can also be referred in this paper. In this regard, the monograph of Gandolfo \cite{Cobweb-book}, is one of the main reference book for many researchers in the field of Mathematics and Economics who work on dynamic models. With the view of the fact that the supply responds to price with one period lag and the demand relies upon the current price, the linear equation in terms of these functions was given by Gandolfo \cite{Cobweb-book} as
\begin{align}
\label{def:Cobweb-basic}
\left\{
  \begin{array}{ll}
  D_{t}=a+bp_{t}\\
S_{t}=a_{1}+b_{1}p_{t-1}.
\end{array}
\right.
\end{align}
Here time $t$ is non-instantaneous and not continuous but it is for a fixed period of time. $p_{t}$ is the market price, $D_{t}$ is the demand at time $t$ and $S_{t}$ is the supply at time $t$. Another constraint along with (\ref{def:Cobweb-basic}), is that for each period, the supply exactly meet the demand, that is $D_{t}=S_{t}$. Using this constraint, the general solution of the system (\ref{def:Cobweb-basic}) is given by
\begin{align*}
p_{t}=(p_{0}-p_{e})\left(\dfrac{b_{1}}{b}\right)^{t}+p_{e}.
\end{align*}
 Here $p_{e}$ is the equilibrium value of the price and $p_{0}$ is the initial price. The stability condition is achieved when the market price attains the equilibrium value, that is, when $\left(\dfrac{b_{1}}{b}\right)<1$. Gandolfo in his monograph not only discussed the above conventional model but also proposed a general cobweb model, explained its stability conditions with examples. In this sequence, Chiarella \cite{Cobweb-Chiarella} discussed the cobweb model for the case where the supply is a nonlinear function and also the case when the equilibrium is not stable. The Cobweb model for such locally unstable system resulted in either period doubling pattern or a chaotic pattern. Further, Li and Xi in \cite{Cobweb-Stability-nonlinear}, studied the stability analysis when both demand and supply functions are nonlinear. The authors perturbed the standard cobweb model, so that the demand and supply are not always equal. With $r>0$, as the price adjustor, the cobweb model assumed by Li and Xi is:
 \begin{align*}
\left\{
  \begin{array}{ll}
  D_{t}=D(p_{t})\\
S_{t}=S(p_{t-1})\\
p_{t}=p_{t-1}+r(D_{t}-S_{t}).
\end{array}
\right.
\end{align*}
Here $D_{t}$ refers to the consumer's demand and $S_{t}$ and $p_{t}$ indicates the supply by the producer and market price respectively. Li and Xi derived the equation for dynamic movement $p_{t}=f(p_{t-1})$ by simple substitution from the model with $p_{t}=p_{t-1}+r(D(p_{t-1})-S(p_{t-1}))$ and $f(p)=p+r(D(p)-S(p))$. By defining the balanced price or the equilibrium price as $p_{e}$, Li and Xi studied conditions for the convergent of $p_{t}$ to $p_{e}$.

In spite of the fact that the cobweb model is attributed based on the time lag between demand and supply, the study of cobweb model for delay system is an interesting as well as dispensable. For such a nonlinear and time delay  cobweb system, Matsumoto and Szidarovszky\cite{Cobweb-delay-1} studied its asymptotic behaviour. By considering three different models-viz- model with single delay, model with two delays and two market model with two delays. Their article exhibits a complete study of stability of those models. In this row it is worth to mention the work of Gori et al. \cite{Cobweb-delay-2} that explains how time delays are responsible for different outcomes depending in disequilibrium dynamics of actual price and equlibrium dynamics of expected price. To have a deep analysis on nonlinear cobweb theory, the work of  Hommes\cite{Cobweb-Hommes-1, Cobweb-Hommes-2, Cobweb-Hommes-3} can be referred.

Fractional Calculus finds its application in all possible directions in more recent years. Especially in the field of economics, fractional calculus  provides a significant contribution in the past as well as present. Article by Tejado et al. \cite{Cobweb-frac-eco} claims that fractional models renders better performance in the study of GDP(Gross domestic growth).  Undoubtedly, the study of cobweb model for fractional order system will help to study cobweb theory in a different perspective. Recently, Bohner and Hatipo\u{g}lu \cite{Cobweb-frac-eco} studied the linear cobweb model with fractional derivative. The fractional derivative Bohner and Hatipo\u{g}lu considered was conformable fractional derivative, which was defined by Khalil et al. \cite{Def-Conformable}. The cobweb model with conformable fractional derivative was given as
 \begin{align*}
\left\{
  \begin{array}{ll}
 D_{t}=a+b(p(t)+T_{\alpha}(p)(t))\\
S_{t}=a_{1}+b_{1}(p(t))\\
D(t)=S(t).
\end{array}
\right.
\end{align*}
Here, $T_{\alpha}(p)(t)$ that represents the conformable fractional derivative of order $\alpha$ is defined as
 \begin{align*}
 T_{\alpha}(f)(t)=
\left\{
  \begin{array}{ll}
\displaystyle\lim_{\epsilon\rightarrow 0} \dfrac{f(t+\epsilon t^{1-\alpha})-f(t)}{\epsilon}, \enspace t>0\\
\displaystyle\lim_{\tau \rightarrow 0^{+}}T_{\alpha}(f)(\tau), \enspace t=0
\end{array}
\right.
\end{align*}
Here $\alpha$ $\in$ $(0,1]$, $f:[0,\infty) \rightarrow \mathbb{R}$. For $\alpha=1$ it coincides with the typical first order derivative  model. The recent work which is the motivation for this work is by Chen et al.\cite{Cobweb-Caputo}, where they discuss the Caputo fractional derivative in the demand function and supply function as separate cases; by suitably representing the model as 
 \begin{align*}
\left\{
  \begin{array}{ll}
 D_{t}=a+b(p(t)+{}^{C}D_{0}^{\alpha}(p)(t))\\
S_{t}=a_{1}+b_{1}(p(t))\\
D(t)=S(t).
\end{array}
\right.
\end{align*}
 and
  \begin{align*}
\left\{
  \begin{array}{ll}
 D_{t}=a+bp(t)\\
S_{t}=a_{1}+b_{1}(p(t)+d\enspace{}^{C}D_{0}^{\alpha}(p)(t))\\
D(t)=S(t).
\end{array}
\right.
\end{align*}
where $0<\alpha\leq 1$, $a,b,a_{1},b_{1},d \in \mathbb{R},  b\neq 0, b\neq b_{1}$. Before proceeding to the further description of the problem, it is important that the basic definitions and basic theory to be mentioned in brief. Section 2 gives an outline of such basic information. Section 3 is assigned for the solution of two models where the Hilfer fractional derivative is in the demand function and when it is in the supply function. It is always necessary to compare results numerically and graphically for a better interpretation. Section 4 contains all such comparative study which reflects the motivation of this paper.
\section{Essential notions}
\begin{definition}{\cite{Podlubny-book}}
The integral
\begin{align*}
I^{\mu}_{t}g(t)=\dfrac{1}{\Gamma(\mu)}\int^{t}_{0}(t-s)^{\mu-1}g(s)ds, \enspace \enspace \mu> 0,
\end{align*}
is called the Riemann-Liouville fractional integral of order $\mu$, where $\Gamma(\cdot)$ is the well known gamma function.
\end{definition}
\begin{definition}{\cite{Podlubny-book}}
The Riemann-Liouville derivative of order $\mu >0$ for a function $g:[0,\infty)\rightarrow\mathbb{R}$ is defined by
\begin{align*}
^{L}D^{\mu}_{0+}g(t)=\dfrac{1}{\Gamma(n-\mu)}\left(\dfrac{d}{dt}\right)^{(n)}\int^{t}_{0}(t-s)^{n-\mu-1}g(s)ds,\enspace t>0,\enspace n-1\leq \mu <n,
\end{align*}
where $n=[\mu]+1$ and $[\mu]$ denotes the integral part of number $\mu$.
\end{definition}
\begin{definition}{\cite{Podlubny-book}}
The Caputo derivative of order $\mu >0$ for a function $g:[0,\infty)\rightarrow\mathbb{R}$ is defined by
\begin{align*}
^{C}D^{\mu}_{0+}g(t)=\dfrac{1}{\Gamma(n-\mu)}\int^{t}_{0}(t-s)^{n-\mu-1}g(s)^{(n)}ds, \enspace t>0,\enspace n-1\leq \mu <n,
\end{align*}
where $\Gamma(\cdot)$ is the Gamma function.
\end{definition}
\begin{definition}{\cite{Hilfer-remark}}
The Hilfer fractional derivative of order $0< \mu <1$ and type $0\leq \nu \leq 1$ of function $g(t)$ is defined by
\begin{align}
D^{\mu,\nu}_{0+}g(t)=I_{0+}^{\nu(1-\mu)}DI_{0+}^{(1-\nu)(1-\mu)}g(t)
\end{align}
where $D:=\dfrac{d}{dt}$.
\end{definition}
\begin{remark}\cite{Hilfer-remark}
\begin{enumerate}[\rm(i)]
\item
The Hilfer fractional derivative $D^{\mu,\nu}_{0+}$  is considered as an merger between the Riemann-Liouville derivative $^{L}D^{\mu}_{0+}$ and the Caputo derivative $^{C}D^{\mu}_{0+}$, since
\begin{align*}
D_{0+}^{\mu,\nu}=
\left\{
  \begin{array}{ll}
   DI_{0+}^{1-\mu}={ }^{L}D_{0+}^{\mu},\enspace \nu=0\\
   I_{0+}^{1-\mu}D={ }^{C}D^{\mu}_{0+},\enspace \nu=1
  \end{array}
\right.
\end{align*}
that is, when $\nu=0$, the Hilfer corresponds to the classical Riemann-Liouville fractional derivative and when $\nu=1$, the Hilfer fractional
derivative corresponds to the classical Caputo derivative.
\item
The parameter $\lambda$ satisfies\\
$\gamma=\mu+\nu-\mu \nu, \enspace 0<\gamma\leq 1, \enspace \gamma\geq\mu, \enspace \gamma>\nu$.
\end{enumerate}
\end{remark}
\begin{definition}{\cite{Podlubny-book}}
The Mittag-Leffler function with one parameter $\mu$  and two parameters $\mu$ and $\lambda$ are given respectively by,
\begin{align*}
  E_{\mu}(z)=\sum_{0}^{\infty}\dfrac{z^{k}}{\Gamma(\mu k+1)} \enspace \mbox{and}\enspace E_{\mu,\gamma}(z)=\sum_{0}^{\infty}\dfrac{z^{k}}{\Gamma(\mu k+\gamma)},\enspace \mu>0,\enspace\gamma>0, \enspace z \in \mathfrak{C}.
\end{align*}
and for $\gamma=1$, $E_{\mu}(z)=E_{\mu,1}(z)$.
\end{definition}
\begin{definition}\cite{Hilfer-book}
The Laplace transform of Hilfer derivative is given by
\begin{align}
\label{eqn:Hilfer-Laplace}
\mathcal{L}\{D_{0+}^{\mu,\nu}g(t)\}=s^{\mu}\mathcal{L}\{g(t)\}-s^{\nu(\mu-1)}\left({}_{0^{+}}I_{t}^{(1-\nu)(1-\mu)}g\right)(0^{+})
\end{align}
where $\left({}_{0^{+}}I_{t}^{(1-\nu)(1-\mu)}g\right)(0^{+})$ is the fractional integral.
\end{definition}
\begin{lemma}\cite{Kilbas-book}
\label{lem:Cobweb:Lap-Mittag}
For $\mu>0$ and $0\leq\nu\leq1$, it is true that,
\begin{align*}
\mathcal{L}\{t^{\gamma-1}E_{\mu,\gamma}(\lambda t^{\mu})\}(s)=\dfrac{ s^{\nu(\mu-1)}}{s^{\mu}-\lambda}
\end{align*}
and
\begin{align*}
\mathcal{L}\{1-E_{\mu}(\lambda t^{\mu})\}(s)=\dfrac{\lambda}{s(s^{\mu}-\lambda)}
\end{align*}
where $\lambda$ is a constant. Also,
\begin{align*}
E_{\mu,\gamma}(\lambda (t-a)^{\mu})=\lambda E_{\mu,\gamma}(\lambda (t-a)^{\mu}).
\end{align*}
\end{lemma}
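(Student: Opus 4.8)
The plan is to derive all three statements directly from the defining power series of the Mittag-Leffler function, transforming it term by term. For the first identity, expand
\begin{align*}
t^{\gamma-1}E_{\mu,\gamma}(\lambda t^{\mu})=\sum_{k=0}^{\infty}\frac{\lambda^{k}\,t^{\mu k+\gamma-1}}{\Gamma(\mu k+\gamma)},
\end{align*}
which is legitimate because $E_{\mu,\gamma}$ is entire, so the series converges absolutely for every $t\geq 0$. Applying the elementary rule $\mathcal{L}\{t^{\beta-1}\}(s)=\Gamma(\beta)s^{-\beta}$ (valid for $\beta>0$, $\mathrm{Re}(s)>0$) to each summand produces $\lambda^{k}s^{-(\mu k+\gamma)}$, and summing the resulting geometric series (convergent when $|s|^{\mu}>|\lambda|$) gives
\begin{align*}
\mathcal{L}\{t^{\gamma-1}E_{\mu,\gamma}(\lambda t^{\mu})\}(s)=\frac{1}{s^{\gamma}}\sum_{k=0}^{\infty}\Big(\frac{\lambda}{s^{\mu}}\Big)^{k}=\frac{1}{s^{\gamma}}\cdot\frac{s^{\mu}}{s^{\mu}-\lambda}=\frac{s^{\mu-\gamma}}{s^{\mu}-\lambda}.
\end{align*}
It then remains to invoke the parameter relation $\gamma=\mu+\nu-\mu\nu$ from the preceding Remark, which gives $\mu-\gamma=-\nu(1-\mu)=\nu(\mu-1)$; inserting this exponent into the numerator yields exactly $s^{\nu(\mu-1)}/(s^{\mu}-\lambda)$.

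For the second identity, specialize the computation above to $\gamma=1$, so that $E_{\mu,1}=E_{\mu}$ and $\mathcal{L}\{E_{\mu}(\lambda t^{\mu})\}(s)=s^{\mu-1}/(s^{\mu}-\lambda)$. Combining this with $\mathcal{L}\{1\}(s)=1/s$ by linearity,
\begin{align*}
\mathcal{L}\{1-E_{\mu}(\lambda t^{\mu})\}(s)=\frac{1}{s}-\frac{s^{\mu-1}}{s^{\mu}-\lambda}=\frac{(s^{\mu}-\lambda)-s^{\mu}}{s(s^{\mu}-\lambda)},
\end{align*}
which collapses to the asserted rational function with denominator $s(s^{\mu}-\lambda)$.

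The third assertion, read as the eigenfunction relation $D^{\mu,\nu}_{0+}\big[t^{\gamma-1}E_{\mu,\gamma}(\lambda t^{\mu})\big]=\lambda\,t^{\gamma-1}E_{\mu,\gamma}(\lambda t^{\mu})$, is cleanest to obtain from the Laplace transform of the Hilfer derivative \eqref{eqn:Hilfer-Laplace}: substituting $g(t)=t^{\gamma-1}E_{\mu,\gamma}(\lambda t^{\mu})$ there and using the first identity reduces the claim to verifying $(I^{(1-\nu)(1-\mu)}_{0+}g)(0^{+})=1$, which is immediate since term-by-term integration gives $I^{(1-\nu)(1-\mu)}_{0+}g(t)=E_{\mu}(\lambda t^{\mu})$ (the exponents collapse via $\gamma+(1-\nu)(1-\mu)=1$) and $E_{\mu}(0)=1$. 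The only genuinely delicate step in the whole argument is the interchange of the infinite sum with the Laplace integral; I would justify it by noting that the partial sums are dominated in absolute value by $t^{\gamma-1}E_{\mu,\gamma}(|\lambda| t^{\mu})$, which is $O(e^{|\lambda|^{1/\mu}t})$, so dominated convergence applies for all $s$ with $\mathrm{Re}(s)>|\lambda|^{1/\mu}$ and the identities then extend to the stated parameter range. Everything else is the geometric-series sum and the algebraic identity $\mu-\gamma=\nu(\mu-1)$.
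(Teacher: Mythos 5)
The paper offers no proof of this lemma at all --- it is imported (with typos) from Kilbas--Srivastava--Trujillo --- so your term-by-term Laplace transform of the defining power series is the right, standard way to supply the missing argument, and your justification of the interchange of sum and integral is adequate. You also correctly isolated the one nontrivial point in the first identity: $s^{\nu(\mu-1)}/(s^{\mu}-\lambda)$ is not the generic transform of $t^{\gamma-1}E_{\mu,\gamma}(\lambda t^{\mu})$ (which is $s^{\mu-\gamma}/(s^{\mu}-\lambda)$) but its specialization under the paper's standing relation $\gamma=\mu+\nu-\mu\nu$, i.e.\ $\mu-\gamma=\nu(\mu-1)$; without that relation the displayed formula is simply false.

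Two caveats. First, your second computation is right but your concluding sentence is not: you have
\begin{align*}
\frac{1}{s}-\frac{s^{\mu-1}}{s^{\mu}-\lambda}=\frac{(s^{\mu}-\lambda)-s^{\mu}}{s(s^{\mu}-\lambda)}=\frac{-\lambda}{s(s^{\mu}-\lambda)},
\end{align*}
so the transform of $1-E_{\mu}(\lambda t^{\mu})$ has numerator $-\lambda$, not $\lambda$; it does not ``collapse to the asserted rational function.'' The lemma as printed carries a sign error, and it is the $-\lambda$ version that is in fact used later, in the passage from $\xi/\bigl(s(s^{\mu}-\lambda)\bigr)$ to $-\tfrac{\xi}{\lambda}\mathcal{L}\{1-E_{\mu}(\lambda t^{\mu})\}(s)$ in the proof of Theorem \ref{thm:Cobweb-Demand}. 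You should state the corrected identity explicitly rather than silently identify your (correct) result with the (incorrect) display. Second, the third display of the lemma is garbled as printed (read literally it forces $\lambda=1$); your reinterpretation of it as the eigenfunction property $D^{\mu,\nu}_{0+}\bigl[t^{\gamma-1}E_{\mu,\gamma}(\lambda t^{\mu})\bigr]=\lambda\, t^{\gamma-1}E_{\mu,\gamma}(\lambda t^{\mu})$ is exactly what the converse half of Theorem \ref{thm:Cobweb-Demand} requires, and your verification via (\ref{eqn:Hilfer-Laplace}) together with $\gamma+(1-\nu)(1-\mu)=1$ and $E_{\mu}(0)=1$ is correct.
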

\begin{lemma}\cite{Mittag}
\label{lem:one-mittag}
Let $0<\mu<2$ and $\dfrac{\pi \mu}{2}<\theta <min \{\pi, \mu\pi\}$, then for an arbitrary positive integer $h$,  the asymptotic  formula is given as,
\begin{align*}
E_{\mu}(z)=\displaystyle{ -\sum_{k=1}^{h}\dfrac{z^{-k}}{\Gamma (1-\mu k)}+\mathcal{O}(|z|^{-1-h}), \enspace |z|\rightarrow \infty,\enspace \theta \leq |arg (z)|\leq \pi}.
\end{align*}
\end{lemma}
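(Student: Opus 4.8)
The plan is to derive the expansion from the classical Hankel-type contour representation of $E_\mu$ (found in \cite{Podlubny-book}) combined with a residue argument and a finite geometric expansion of the Cauchy kernel.

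First I would record the representation. Fix $\epsilon>0$ and an angle $\delta$ with $\tfrac{\pi\mu}{2}<\delta<\min\{\pi,\mu\pi\}$; the hypothesis $0<\mu<2$ is precisely what makes this range nonempty. Let $\gamma(\epsilon;\delta)$ be the loop consisting of the ray $\arg\zeta=-\delta$ with $|\zeta|\ge\epsilon$, the circular arc $|\zeta|=\epsilon$ with $|\arg\zeta|\le\delta$, and the ray $\arg\zeta=\delta$ with $|\zeta|\ge\epsilon$, traversed in the direction of increasing $\arg\zeta$. Starting from $E_\mu(z)=\sum_{k\ge0}z^k/\Gamma(\mu k+1)$, substituting Hankel's formula $1/\Gamma(w)=\tfrac{1}{2\pi i}\int_{\mathrm{Ha}}e^{v}v^{-w}\,dv$ into each coefficient, changing variables by $\zeta=v^\mu$, and interchanging summation with integration on a contour pushed out far enough that the resulting geometric series converges uniformly, I would arrive at
\begin{align*}
E_\mu(z)=\frac{1}{2\pi i\,\mu}\int_{\gamma(\epsilon;\delta)}\frac{\exp\!\big(\zeta^{1/\mu}\big)}{\zeta-z}\,d\zeta+R(z),\qquad z\notin\gamma(\epsilon;\delta),
\end{align*}
where $R(z)=\tfrac1\mu\exp(z^{1/\mu})$ when $z$ lies in the sector $\{\,|\zeta|>\epsilon,\ |\arg\zeta|<\delta\,\}$ — the residue at the pole $\zeta=z$ that is crossed during the deformation — and $R(z)=0$ otherwise. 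Since $\delta>\tfrac{\pi\mu}{2}$, along the two rays one has $|\arg(\zeta^{1/\mu})|=\delta/\mu>\pi/2$, so $\Re(\zeta^{1/\mu})\to-\infty$ there and the integral converges absolutely.

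Next I would specialize to $z$ with $\theta\le|\arg z|\le\pi$ and choose $\delta$ with $\tfrac{\pi\mu}{2}<\delta<\theta$. Then $|\arg z|>\delta$, so $R(z)=0$, and for $|z|$ large the distance from $z$ to $\gamma(\epsilon;\delta)$ is attained on one of the rays, giving $\operatorname{dist}\!\big(z,\gamma(\epsilon;\delta)\big)\ge c\,|z|$ for a constant $c>0$ depending only on $\mu$ and $\theta$. Into the integral I would insert the algebraic identity $\frac{1}{\zeta-z}=-\sum_{k=0}^{h-1}\zeta^{k}z^{-k-1}+z^{-h}\zeta^{h}(\zeta-z)^{-1}$. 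For $0\le k\le h-1$, undoing the change of variables $\zeta=v^\mu$ and applying Hankel's formula once more gives
\begin{align*}
\frac{1}{2\pi i\,\mu}\int_{\gamma(\epsilon;\delta)}\exp\!\big(\zeta^{1/\mu}\big)\,\zeta^{k}\,d\zeta=\frac{1}{2\pi i}\int_{\mathrm{Ha}}e^{v}v^{\mu(k+1)-1}\,dv=\frac{1}{\Gamma\!\big(1-\mu(k+1)\big)},
\end{align*}
so these terms sum, after re-indexing by $j=k+1$, to $-\sum_{j=1}^{h}z^{-j}/\Gamma(1-\mu j)$ — exactly the asserted main part.

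Finally, the leftover term $z^{-h}\cdot\tfrac{1}{2\pi i\,\mu}\int_{\gamma(\epsilon;\delta)}\exp(\zeta^{1/\mu})\,\zeta^{h}(\zeta-z)^{-1}\,d\zeta$ is bounded using $|\zeta-z|\ge c|z|$ on the contour together with $\int_{\gamma(\epsilon;\delta)}\big|\exp(\zeta^{1/\mu})\big|\,|\zeta|^{h}\,|d\zeta|<\infty$ (the super-exponential decay on the rays established above dominates the polynomial weight $|\zeta|^{h}$ for fixed $h$); this yields $O(|z|^{-h})\cdot O(|z|^{-1})=O(|z|^{-1-h})$, uniformly for $\theta\le|\arg z|\le\pi$. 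I expect the only genuine difficulty to be the first step: constructing the Hankel contour, justifying the term-by-term integration, and — most delicately — keeping track of the residue $\tfrac1\mu\exp(z^{1/\mu})$, which is picked up exactly when $z$ falls inside the sector cut out by the rays of $\gamma(\epsilon;\delta)$. That bookkeeping is precisely the reason the exponential term is absent in the present sector $\theta\le|\arg z|\le\pi$ while it would be present for $|\arg z|<\theta$; by contrast, the geometric expansion and the remainder estimate are routine applications of Hankel's formula and a crude length-times-supremum bound.
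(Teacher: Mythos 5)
The paper does not prove this lemma at all: it is quoted verbatim, with a citation to the Gorenflo--Kilbas--Mainardi--Rogosin monograph (and it also appears as Theorem~1.4 in Podlubny's book), so there is no in-paper argument to compare against. Your proof is the standard one from those references --- the Hankel-loop representation $E_\mu(z)=\frac{1}{2\pi i\,\mu}\int_{\gamma(\epsilon;\delta)}e^{\zeta^{1/\mu}}(\zeta-z)^{-1}\,d\zeta$ with the residue term tracked according to whether $z$ lies inside the sector, followed by the finite geometric expansion of $(\zeta-z)^{-1}$, term-by-term evaluation via Hankel's formula giving $1/\Gamma(1-\mu(k+1))$, and the length-times-supremum bound on the remainder using $\operatorname{dist}(z,\gamma)\ge c|z|$ --- and each step, including the choice $\tfrac{\pi\mu}{2}<\delta<\theta$ that both kills the residue and forces $\Re(\zeta^{1/\mu})\to-\infty$ on the rays, is correct.
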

\begin{lemma}\cite{Mittag-two-para}
\label{lem:two-mittag}
Let $0<\mu<2$, $0<\gamma<2$, $\mu\gamma<2$ and $\dfrac{\pi \mu\gamma}{2}<\theta <min \{\pi, \mu\gamma\pi\}$, then for an arbitrary positive integer $h$,  the asymptotic  formula is given as,
\begin{align*}
E_{\mu,\gamma}(z)=\displaystyle{ -\sum_{k=1}^{h}\dfrac{z^{-k}}{\Gamma (\gamma-\mu k)}+\mathcal{O}(|z|^{-1-h}), \enspace |z|\rightarrow \infty,\enspace \theta \leq |arg (z)|\leq \pi}.
\end{align*}
\end{lemma}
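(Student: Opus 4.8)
The plan is to deduce the expansion from the classical Hankel-type contour representation of the two-parameter Mittag-Leffler function, which is available precisely because $0<\mu<2$, and then to read off the algebraic terms using Hankel's integral for the reciprocal Gamma function,
\begin{align*}
\frac{1}{\Gamma(s)}=\frac{1}{2\pi i}\int_{\mathcal H}e^{t}\,t^{-s}\,dt,
\end{align*}
where $\mathcal H$ comes in from $-\infty$ below the negative real axis, loops once counter-clockwise around the origin, and returns to $-\infty$ above it. The stated conditions on $\mu$, $\gamma$, $\theta$ leave room to fix an auxiliary opening angle $\delta$ with $\frac{\pi\mu}{2}<\delta<\min\{\theta,\mu\pi\}$; let $\mathcal C(\epsilon,\delta)$ be the contour consisting of the arc $\{\,|\zeta|=\epsilon,\ |\arg\zeta|\le\delta\,\}$ together with the two rays $\{\,\arg\zeta=\pm\delta,\ |\zeta|\ge\epsilon\,\}$, oriented so that $\arg\zeta$ increases along it.

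First I would establish, by substituting the Hankel representation of $1/\Gamma(\mu k+\gamma)$ into the series defining $E_{\mu,\gamma}$ and summing the resulting geometric series, the representation
\begin{align*}
E_{\mu,\gamma}(z)=\frac{1}{2\pi i\,\mu}\int_{\mathcal C(\epsilon,\delta)}\frac{e^{\zeta^{1/\mu}}\,\zeta^{(1-\gamma)/\mu}}{\zeta-z}\,d\zeta,\qquad \delta<|\arg z|\le\pi,
\end{align*}
the restriction $|\arg z|>\delta$ ensuring that $\zeta=z$ is not encircled by $\mathcal C(\epsilon,\delta)$, so that the exponentially large term $\tfrac1\mu z^{(1-\gamma)/\mu}e^{z^{1/\mu}}$ does not appear; since $\delta<\theta$, this applies to every $z$ with $\theta\le|\arg z|\le\pi$. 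Next I would insert the finite geometric identity $\frac{1}{\zeta-z}=-\sum_{k=0}^{h-1}\frac{\zeta^{k}}{z^{k+1}}+\frac{1}{z^{h}}\,\frac{\zeta^{h}}{\zeta-z}$. For $0\le k\le h-1$, the substitution $t=\zeta^{1/\mu}$ turns the integral over $\mathcal C(\epsilon,\delta)$ into one over a Hankel loop equivalent to $\mathcal H$ and yields
\begin{align*}
\frac{1}{2\pi i\,\mu}\int_{\mathcal C(\epsilon,\delta)}e^{\zeta^{1/\mu}}\zeta^{(1-\gamma)/\mu+k}\,d\zeta=\frac{1}{2\pi i}\int_{\mathcal H}e^{t}t^{-(\gamma-\mu(k+1))}\,dt=\frac{1}{\Gamma(\gamma-\mu(k+1))};
\end{align*}
re-indexing $k+1\mapsto k$ then produces exactly $-\sum_{k=1}^{h}z^{-k}/\Gamma(\gamma-\mu k)$, and what remains is
\begin{align*}
R_{h}(z)=\frac{1}{2\pi i\,\mu\,z^{h}}\int_{\mathcal C(\epsilon,\delta)}\frac{e^{\zeta^{1/\mu}}\,\zeta^{(1-\gamma)/\mu+h}}{\zeta-z}\,d\zeta.
\end{align*}

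The crux is to show $R_{h}(z)=\mathcal O(|z|^{-1-h})$, that is, that the integral in $R_{h}$ is $\mathcal O(|z|^{-1})$ uniformly for $\theta\le|\arg z|\le\pi$. The decisive ingredient is a uniform lower bound: since $\theta-\delta>0$, there is $c_{\delta}>0$ with $|\zeta-z|\ge c_{\delta}(|\zeta|+|z|)\ge c_{\delta}|z|$ for all $\zeta\in\mathcal C(\epsilon,\delta)$ and all $z$ with $\theta\le|\arg z|\le\pi$. Extracting $|z|^{-1}$, the integral is dominated by $c_{\delta}^{-1}|z|^{-1}\int_{\mathcal C(\epsilon,\delta)}\bigl|e^{\zeta^{1/\mu}}\bigr|\,|\zeta|^{(1-\gamma)/\mu+h}\,|d\zeta|$, which is bounded on the arc, while on the rays $\arg\zeta=\pm\delta$ one has $\operatorname{Re}(\zeta^{1/\mu})=|\zeta|^{1/\mu}\cos(\delta/\mu)$ with $\cos(\delta/\mu)<0$ (since $\frac{\pi}{2}<\delta/\mu\le\pi$ by the choice of $\delta$), so $\int_{\epsilon}^{\infty}e^{-c\,r^{1/\mu}}r^{(1-\gamma)/\mu+h}\,dr<\infty$. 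Hence the integral is $\mathcal O(|z|^{-1})$ and $R_{h}(z)=\mathcal O(|z|^{-1-h})$, which is the assertion. The only genuine obstacle is this last estimate --- securing the uniform lower bound on $|\zeta-z|$ and exploiting $\cos(\delta/\mu)<0$ to kill the exponential along the rays; the rest is bookkeeping, and the hypotheses on $\mu$, $\gamma$, $\theta$ are there exactly so that a legitimate choice of $\delta$, and hence this whole scheme, is available.
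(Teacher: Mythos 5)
This lemma is not proved in the paper at all --- it is quoted from \cite{Mittag-two-para}, and it is the classical asymptotic expansion of the two--parameter Mittag--Leffler function (Erd\'elyi; Theorem~1.4 in Podlubny's book) --- so there is no in-paper argument to compare against. Your proof is the standard one and is essentially sound: the Hankel-type representation $E_{\mu,\gamma}(z)=\frac{1}{2\pi i\,\mu}\int_{\mathcal C(\epsilon,\delta)}e^{\zeta^{1/\mu}}\zeta^{(1-\gamma)/\mu}(\zeta-z)^{-1}\,d\zeta$ valid for $\delta<|\arg z|\le\pi$, the finite geometric expansion of $(\zeta-z)^{-1}$, the term-by-term reduction to Hankel's formula giving $1/\Gamma(\gamma-\mu(k+1))$, and the remainder bound via the uniform angular separation $|\zeta-z|\ge\sin\bigl(\tfrac{\theta-\delta}{2}\bigr)\,(|\zeta|+|z|)$ combined with $\operatorname{Re}\zeta^{1/\mu}=|\zeta|^{1/\mu}\cos(\delta/\mu)<0$ on the rays are exactly the right ingredients, and each displayed computation checks out. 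The one point of friction is with the hypothesis as printed in the lemma: the sector condition reads $\frac{\pi\mu\gamma}{2}<\theta<\min\{\pi,\mu\gamma\pi\}$, whereas your choice of the auxiliary angle needs $\frac{\pi\mu}{2}<\delta<\min\{\theta,\mu\pi\}$ and hence $\theta>\frac{\pi\mu}{2}$, which the printed hypothesis does not supply when $\gamma<1$. That is a defect of the statement rather than of your argument: in the classical form of the result the sector condition is $\frac{\pi\mu}{2}<\theta<\min\{\pi,\mu\pi\}$ and does not involve the second parameter $\gamma$ at all (compare Lemma~\ref{lem:one-mittag}, the case $\gamma=1$), so the appearance of the product $\mu\gamma$ here looks like a transcription slip from the source. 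You are in effect proving the correct, standard version of the lemma, which is also the only version the paper actually needs (it is applied with $z=\lambda t^{\mu}$, $\lambda<0$, i.e.\ $\arg z=\pi$).
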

\begin{remark}
\label{rem:Mittag-zero}
As $z\rightarrow \infty $ on both the Lemma \ref{lem:one-mittag} and Lemma \ref{lem:two-mittag} reduces to zero. That is,
\begin{align*}
\displaystyle{\lim_{z\rightarrow \infty} E_{\mu}(-z)=\lim_{z\rightarrow \infty}\left( -\sum_{k=1}^{h}\dfrac{(-z)^{-k}}{\Gamma (1-\mu k)}+\mathcal{O}(|-z|^{-1-h})\right) \rightarrow 0}
\end{align*}
and
\begin{align*}
\displaystyle{\lim_{z\rightarrow \infty} E_{\mu\gamma}(-z)=\lim_{z\rightarrow \infty}\left( -\sum_{k=1}^{h}\dfrac{(-z)^{-k}}{\Gamma (\gamma-\mu k)}+\mathcal{O}(|-z|^{-1-h})\right) \rightarrow 0}
\end{align*}
\end{remark}
\section{Cobweb model with Hilfer derivative}
\subsection{Hilfer derivative in the demand function}
The proposed basic cobweb model with Hilfer fractional derivative in the demand function is given by
 \begin{align}
 \label{eqn:Cobweb-Hilfer-demand}
\left\{
  \begin{array}{ll}
 D_{t}=\alpha+\beta(p(t)+D_{0+}^{\mu,\nu}(p)(t))\\
S_{t}=\alpha_{1}+\beta_{1}(p(t))\\
D(t)=S(t).
\end{array}
\right.
\end{align}
where $D_{0+}^{\mu,\nu}(p)(t)$ is the Hilfer fractional derivative with $ \alpha, \beta, \alpha_{1}, \beta_{1} \in \mathbb{R}, \beta\neq 0, \beta\neq\beta_{1}$ and $0<\mu\leq 1, 0\leq\nu\leq 1$.
\begin{theorem}
\label{thm:Cobweb-Demand}
The unique solution of the cobweb model {\rm{(\ref{eqn:Cobweb-Hilfer-demand})}} with Hilfer dreivative in the demand function is given by
\begin{align}
\label{eqn:Cobweb-demand-solution}
p(t)=Ct^{\gamma-1}E_{\mu,\gamma}(\lambda t^{\mu})-\dfrac{\xi}{\lambda}+\dfrac{\xi}{\lambda}E_{\mu}(\lambda t^{\mu}).
\end{align}
Here $\lambda=\dfrac{\beta_{1}-\beta}{\beta},\enspace \xi= \dfrac{\alpha_{1}-\alpha}{\beta}, \enspace C\in \mathbb{R}$.
\end{theorem}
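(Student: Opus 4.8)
The strategy is to use the market--clearing condition $D(t)=S(t)$ to reduce the system \eqref{eqn:Cobweb-Hilfer-demand} to a single linear Hilfer fractional differential equation for the price, and then to solve that equation by the Laplace transform. Equating the first two lines of \eqref{eqn:Cobweb-Hilfer-demand} gives $\alpha+\beta p(t)+\beta D_{0+}^{\mu,\nu}(p)(t)=\alpha_{1}+\beta_{1}p(t)$, and since $\beta\neq 0$ this rearranges to
\[
D_{0+}^{\mu,\nu}(p)(t)=\xi+\lambda p(t),\qquad \lambda=\frac{\beta_{1}-\beta}{\beta},\quad \xi=\frac{\alpha_{1}-\alpha}{\beta}.
\]
The hypothesis $\beta\neq\beta_{1}$ ensures $\lambda\neq 0$, which is what makes the terms $\xi/\lambda$ in \eqref{eqn:Cobweb-demand-solution} meaningful.

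I would then apply the Laplace transform and invoke \eqref{eqn:Hilfer-Laplace}. Setting $P(s)=\mathcal L\{p(t)\}$ and writing $C$ for the initial fractional--integral value $\big({}_{0^{+}}I_{t}^{(1-\nu)(1-\mu)}p\big)(0^{+})$, the equation becomes $s^{\mu}P(s)-Cs^{\nu(\mu-1)}=\dfrac{\xi}{s}+\lambda P(s)$, whence
\[
P(s)=\frac{\xi}{s\,(s^{\mu}-\lambda)}+\frac{C\,s^{\nu(\mu-1)}}{s^{\mu}-\lambda}.
\]
Now I invert term by term using Lemma \ref{lem:Cobweb:Lap-Mittag}. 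Its second identity handles $\dfrac{\xi}{s(s^{\mu}-\lambda)}$ and contributes $-\dfrac{\xi}{\lambda}+\dfrac{\xi}{\lambda}E_{\mu}(\lambda t^{\mu})$; the first identity handles $\dfrac{C s^{\nu(\mu-1)}}{s^{\mu}-\lambda}$ once one checks that the Mittag--Leffler index there is $\gamma=\mu+\nu-\mu\nu$, equivalently $\nu(\mu-1)=\mu-\gamma$ — exactly the parameter relation recorded in the remark following the definition of the Hilfer derivative — and it contributes $C t^{\gamma-1}E_{\mu,\gamma}(\lambda t^{\mu})$. Summing the two pieces yields \eqref{eqn:Cobweb-demand-solution}.

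For the uniqueness claim I would note that, since the Laplace transform is injective on the function class in which $p$ lives, the representation above captures all solutions of the reduced equation; the only freedom is the value of ${}_{0^{+}}I_{t}^{(1-\nu)(1-\mu)}p$ at $0^{+}$, which is precisely the constant $C$, so \eqref{eqn:Cobweb-demand-solution} with $C\in\mathbb R$ is exactly the solution set, and conversely every such function solves the equation (using $D_{0+}^{\mu,\nu}\big[t^{\gamma-1}E_{\mu,\gamma}(\lambda t^{\mu})\big]=\lambda t^{\gamma-1}E_{\mu,\gamma}(\lambda t^{\mu})$ together with the Laplace identity for the $\xi/s$ forcing term). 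The step that needs the most care is the bookkeeping of the initial--value term in \eqref{eqn:Hilfer-Laplace} and the verification that the index $\gamma$ is forced to equal $\mu+\nu-\mu\nu$, so that Lemma \ref{lem:Cobweb:Lap-Mittag} applies verbatim; once that is in place, the remainder is routine algebra and term-by-term inversion.
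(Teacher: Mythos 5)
Your proposal is correct and follows essentially the same route as the paper: reduce via $D(t)=S(t)$ to $D_{0+}^{\mu,\nu}p=\lambda p+\xi$, apply the Laplace transform \eqref{eqn:Hilfer-Laplace} with $C=\bigl({}_{0^{+}}I_{t}^{(1-\nu)(1-\mu)}p\bigr)(0^{+})$, and invert term by term using Lemma \ref{lem:Cobweb:Lap-Mittag}, finishing with the converse verification. Your explicit check that $\nu(\mu-1)=\mu-\gamma$ with $\gamma=\mu+\nu-\mu\nu$ is a useful piece of bookkeeping that the paper leaves implicit.
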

\begin{proof}
The solution can be deduced using the Laplace transform method. Solving the demand and supply equations with the equilibrium condition in (\ref{eqn:Cobweb-Hilfer-demand}) results in,
\begin{align*}
\alpha +\beta(p(t)+D_{0+}^{\mu,\nu}p(t))=\alpha_{1}+\beta_{1}p(t).
\end{align*}
On futher simplification, the above equation can be written as
\begin{align}
\label{eqn:equlibrium-demand}
D_{0+}^{\mu,\nu}p(t)=\lambda p(t)+\xi,
\end{align}
where $\lambda=\dfrac{\beta_{1}-\beta}{\beta},\enspace \xi= \dfrac{\alpha_{1}-\alpha}{\beta}$.
Taking Laplace transform on both sides of the above equation leads to,
\begin{align*}
\mathcal{L}\{D_{0+}^{\mu,\nu}(p)(t)\}(s)=\lambda \mathcal{L}\{p(t)\}(s)+\mathcal{L}\{\xi\}.
\end{align*}
Directly applying the Laplace transform (\ref{eqn:Hilfer-Laplace}) of Hilfer fractional derivative results in
\begin{align*}
s^{\mu}\mathcal{L}\{p(t)\}(s)-s^{\nu(\mu-1)}\left({}_{0^{+}}I^{(1-\nu)(1-\mu)}_{t}p\right)(0^{+})=\lambda \mathcal{L}\{p(t)\}(s)+\dfrac{\xi}{s}.
\end{align*}
Separating the like terms gives,
\begin{align*}
\Rightarrow s^{\mu}\mathcal{L}\{p(t)\}(s)-\lambda \mathcal{L}\{p(t)\}(s)=&s^{\nu(\mu-1)}\left({}_{0^{+}}I^{(1-\nu)(1-\mu)}_{t}p\right)(0^{+})+\dfrac{\xi}{s}\\
\Rightarrow \mathcal{L}\{p(t)\}(s)=&\dfrac{s^{\nu(\mu-1)}\left({}_{0^{+}}I^{(1-\nu)(1-\mu)}_{t}p\right)(0^{+})}{s^{\mu}-\lambda}+\dfrac{\xi}{s(s^{\mu}-\lambda)}.
\end{align*}
Further on substituting $\left({}_{0^{+}}I^{(1-\nu)(1-\mu)}_{t}p\right)(0^{+})=C$, the above equation scales down to
\begin{align*}
\mathcal{L}\{p(t)\}(s)=\dfrac{C s^{\nu(\mu-1)}}{s^{\mu}-\lambda}+\dfrac{\xi}{s(s^{\mu}-\lambda)}.
\end{align*}
The following equation is the outcome based on Lemma \ref{lem:Cobweb:Lap-Mittag}.
\begin{align}
\label{eqn:Cobweb-demand-Laplace}
\mathcal{L}\{p(t)\}(s)=C\mathcal{L}\{t^{\gamma-1}E_{\mu,\gamma}(\lambda t^{\mu})\}(s)-\dfrac{\xi}{\lambda}\mathcal{L}\{1-E_{\mu}(\lambda t^{\mu})\}(s).
\end{align}
Taking inverse Laplace transform of (\ref{eqn:Cobweb-demand-Laplace}) gives
\begin{align*}
p(t)=C t^{\gamma-1}E_{\mu,\gamma}(\lambda t^{\mu})-\dfrac{\xi}{\lambda}+\dfrac{\xi}{\lambda}E_{\mu}(\lambda t^{\mu}),
\end{align*}
which is the solution of the cobweb model with Hilfer fractional derivative.

Now, it is to be proved conversely that if such $p$ exists, then it satisfies the equation (\ref{eqn:equlibrium-demand}). Given $
p(t)=Ct^{\gamma-1}E_{\mu,\gamma}(\lambda t^{\mu})-\dfrac{\xi}{\lambda}+\dfrac{\xi}{\lambda}E_{\mu}(\lambda t^{\mu}).$ Taking Hilfer derivative on both sides and using Lemma \ref{lem:Cobweb:Lap-Mittag} results in
\begin{align*}
D_{0+}^{\mu,\nu}p(t)=&\lambda Ct^{\gamma-1}E_{\mu,\gamma}(\lambda t^{\mu})+\xi E_{\mu}(\lambda t^{\mu})\\
\Rightarrow D_{0+}^{\mu,\nu}p(t)=&\lambda\left(Ct^{\gamma-1}E_{\mu,\gamma}(\lambda t^{\mu})+\dfrac{\xi}{\lambda}E_{\mu}(\lambda t^{\mu})\right)\\
\Rightarrow D_{0+}^{\mu,\nu}p(t)=&\lambda\left(Ct^{\gamma-1}E_{\mu,\gamma}(\lambda t^{\mu})+\dfrac{\xi}{\lambda}E_{\mu}(\lambda t^{\mu})-\dfrac{\xi}{\lambda}\right)+\xi\\
\Rightarrow D_{0+}^{\mu,\nu}p(t)=&p(t)\lambda+\xi
\end{align*}
Therefore the proposed solution $p(t)$ satisfies the given cobweb model.
\end{proof}
Stability conditions for which the solution converges to the equilibrium value $p_{e}$ is given in the subsequent theorem.
\begin{theorem}
\label{thm:demand-stability}
Assume $\dfrac{\beta_{1}}{\beta}<1$. Then the solution of {\rm{(\ref{eqn:Cobweb-Hilfer-demand})}} converges to the equilibrium value $p_{e}$. The equilibrium value $p_{e}$ according to Gandolfo {\rm{\cite{Cobweb-book}}} is given as $\dfrac{\alpha_{1}-\alpha}{\beta-\beta_{1}}$ .
\end{theorem}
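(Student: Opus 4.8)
The plan is to extract the large-time behaviour directly from the closed-form solution \eqref{eqn:Cobweb-demand-solution} furnished by Theorem \ref{thm:Cobweb-Demand}, and to show that every $t$-dependent term is transient, so that $p(t)$ tends to the constant $-\xi/\lambda$, which we then identify with Gandolfo's equilibrium value $p_e$. Since Theorem \ref{thm:Cobweb-Demand} already gives the \emph{unique} solution of \eqref{eqn:Cobweb-Hilfer-demand} in closed form, no further existence argument is needed; the whole of the proof is an asymptotic analysis of that formula as $t\to\infty$.

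First I would convert the hypothesis $\beta_{1}/\beta<1$ into a sign statement for $\lambda=(\beta_{1}-\beta)/\beta$. A short case check on the sign of $\beta$ shows that in either case $\beta_{1}/\beta<1$ forces $\lambda<0$ (when $\beta>0$ one gets $\beta_{1}<\beta$; when $\beta<0$ the inequality reverses to $\beta_{1}>\beta$, but the negative denominator flips the sign back). Note also $\lambda\neq0$ since $\beta\neq\beta_{1}$, so the division by $\lambda$ in \eqref{eqn:Cobweb-demand-solution} is legitimate. Consequently, for every $t>0$ the argument $z=\lambda t^{\mu}$ is a negative real number, so $|\arg z|=\pi$ and $|z|\to\infty$ as $t\to\infty$; this is precisely the sector $\theta\le|\arg z|\le\pi$ in which Lemmas \ref{lem:one-mittag} and \ref{lem:two-mittag} apply, the remaining hypotheses $0<\mu<2$, $0<\gamma<2$, $\mu\gamma<2$ being automatic from $0<\mu\le1$ and $0<\gamma\le1$ (recall $\gamma=\mu+\nu-\mu\nu$).

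Next I would dispose of the three terms of \eqref{eqn:Cobweb-demand-solution} separately. By Lemma \ref{lem:one-mittag} (equivalently Remark \ref{rem:Mittag-zero}), $E_{\mu}(\lambda t^{\mu})\to0$, so $\tfrac{\xi}{\lambda}E_{\mu}(\lambda t^{\mu})\to0$ and the constant $-\tfrac{\xi}{\lambda}$ survives. For the first term, Lemma \ref{lem:two-mittag} with $h=1$ gives $E_{\mu,\gamma}(\lambda t^{\mu})=\mathcal{O}(|\lambda t^{\mu}|^{-1})=\mathcal{O}(t^{-\mu})$, and multiplying by $Ct^{\gamma-1}$ leaves a quantity of order $t^{\gamma-1-\mu}$; since $\gamma\le1$ and $\mu>0$ the exponent $\gamma-1-\mu$ is strictly negative, so $Ct^{\gamma-1}E_{\mu,\gamma}(\lambda t^{\mu})\to0$. (In the boundary case $\mu=1$ one has $\gamma=1$, $t^{\gamma-1}\equiv1$ and $E_{1}(\lambda t)=e^{\lambda t}\to0$ directly.) Hence $\lim_{t\to\infty}p(t)=-\xi/\lambda$, and a one-line simplification
\[
-\frac{\xi}{\lambda}=-\frac{(\alpha_{1}-\alpha)/\beta}{(\beta_{1}-\beta)/\beta}=-\frac{\alpha_{1}-\alpha}{\beta_{1}-\beta}=\frac{\alpha_{1}-\alpha}{\beta-\beta_{1}}=p_{e}
\]
finishes the proof. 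I expect the only point requiring care to be the first term: one should use the \emph{polynomial decay rate} of $E_{\mu,\gamma}$ from Lemma \ref{lem:two-mittag} (or, alternatively, note that $t^{\gamma-1}$ is itself bounded on $[1,\infty)$ and combine it with $E_{\mu,\gamma}\to0$), rather than appealing only to $E_{\mu,\gamma}\to0$ in isolation, since the factor $t^{\gamma-1}$ is unbounded near $t=0^{+}$ when $\gamma<1$. Everything else reduces to the Mittag–Leffler asymptotics already recorded in Section 2.
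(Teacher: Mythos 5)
Your proposal is correct and follows essentially the same route as the paper's own proof: deduce $\lambda<0$ from $\beta_{1}/\beta<1$, use the Mittag--Leffler asymptotics of Lemmas \ref{lem:one-mittag} and \ref{lem:two-mittag} (Remark \ref{rem:Mittag-zero}) to kill the two $t$-dependent terms of \eqref{eqn:Cobweb-demand-solution}, and identify $-\xi/\lambda$ with $p_{e}$. Your treatment is in fact slightly more careful than the paper's (the explicit $\mathcal{O}(t^{\gamma-1-\mu})$ rate for the first term and the separate handling of $\mu=1$), but the underlying argument is the same.
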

\begin{proof}
In Theorem \ref{eqn:Cobweb-Hilfer-demand}, it is assumed that $\lambda=\dfrac{\beta_{1}-\beta}{\beta}$. Also given that $\dfrac{\beta_{1}}{\beta}<1\Rightarrow \lambda<0$, and hence for $0<\mu<1$, $\lambda t^{\mu}\rightarrow -\infty$ as $t\rightarrow +\infty$. With reference to the Remark \ref{rem:Mittag-zero}, when $ t\rightarrow \infty$,
\begin{align}
\label{eqn:Cobweb-Mittag-limiting}
\displaystyle \lim_{t\rightarrow \infty}E_{\mu}(\lambda t^{\mu})=0.
\end{align}
 Now apply limit on both sides of the solution (\ref{eqn:Cobweb-demand-solution}). The limiting value converges to the equilibrium value $p_{e}$ which is given by Gandolfo \cite{Cobweb-book}. That is,
 \begin{align*}
 \displaystyle \lim_{t\rightarrow +\infty}p(t)=\displaystyle \lim_{t\rightarrow +\infty}\left[Ct^{\gamma-1}E_{\mu,\gamma}(\lambda t^{\mu})-\dfrac{\xi}{\lambda}+\dfrac{\xi}{\lambda}E_{\mu}(\lambda t^{\mu})\right].
 \end{align*}
Now two cases arise:-
\begin{enumerate}[(1)]
\item When $\gamma=1$, then from (\ref{eqn:Cobweb-Mittag-limiting})
\begin{align*}
\displaystyle \lim_{t\rightarrow +\infty}Ct^{0}E_{\mu,1}(\lambda t^{\mu})\rightarrow 0.
\end{align*}
Hence $\displaystyle \lim_{t\rightarrow +\infty}p(t)=-\dfrac{\xi}{\lambda}$.
\item
When $0<\gamma<1$, then from Remark \ref{rem:Mittag-zero}
\begin{align*}
\displaystyle \lim_{t\rightarrow +\infty}Ct^{\gamma-1}E_{\mu,\gamma}(\lambda t^{\mu})\rightarrow 0.
\end{align*}
Hence $\displaystyle \lim_{t\rightarrow +\infty}p(t)=-\dfrac{\xi}{\lambda}$.
\end{enumerate}
It can be concluded  thereby, $\displaystyle \lim_{t\rightarrow +\infty}p(t)=-\dfrac{\xi}{\lambda}=\dfrac{\alpha_{1}-\alpha}{\beta-\beta_{1}}=p_{e}$.
\end{proof}
The following corollary is significant by means of comparative analysis. Considering the fact that Caputo derivative and Riemann derivative are a particular cases of Hilfer derivative, it is noteworthy to discuss how the solution of cobweb model with Caputo and Riemann derivative can be derived easily from the solution (\ref{eqn:Cobweb-demand-solution}).
\begin{lemma}{${}$}
\label{Cobweb-Demand-compare}
\begin{enumerate}[1.]
\item
When $\nu=1$, $\gamma$ takes the value $1$ as $\gamma=\mu+\nu-\mu\nu$. Substituting $gamma=1$ in {\rm{(\ref{eqn:Cobweb-demand-solution})}}, gives
\begin{align*}
p(t)=C_{0}E_{\mu,1}(\lambda t^{\mu})-\dfrac{\xi}{\lambda}+\dfrac{\xi}{\lambda}E_{\mu}(\lambda t^{\mu})\\
\Rightarrow p(t)=\left(C_{0}+\dfrac{\xi}{\lambda}\right)E_{\mu,1}(\lambda t^{\mu})-\dfrac{\xi}{\lambda},
\end{align*}
which is the solution of the cobweb model with Caputo derivative discussed by Chen et al {\rm{\cite{Cobweb-Caputo}}}, which is on further assuming $\mu=1$, reduces to the integer order solution given by Gandolfo {\rm{\cite{Cobweb-book}}}.
\item
When $\nu=0$, $\gamma$ takes the value $\mu$ and by substituting this value in the solution {\rm{(\ref{eqn:Cobweb-demand-solution})}}, the solution of the cobweb model with Riemann fractional derivative, can be deduced, viz,
\begin{align*}
p(t)=C_{1}t^{\mu-1}E_{\mu,\mu}(\lambda t^{\mu})-\dfrac{\xi}{\lambda}+\dfrac{\xi}{\lambda}E_{\mu}(\lambda t^{\mu})
\end{align*}
where $C_{1}=\left({}_{0}I^{1-\mu}_{t}p\right)(0^{+})$.
\item
It may be noted that, the case $\nu=0$ leads to the cobweb model with Riemann-Liouville fractional derivative is new in the literature.
\end{enumerate}
\end{lemma}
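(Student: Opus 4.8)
The three items are direct specializations of Theorem~\ref{thm:Cobweb-Demand}, so the plan is simply to substitute the relevant value of $\nu$ into the solution formula~\eqref{eqn:Cobweb-demand-solution}, simplify, and identify the constants. The only auxiliary facts I would invoke are the parameter relation $\gamma=\mu+\nu-\mu\nu$, the identity $E_{\mu,1}=E_{\mu}$ built into the two-parameter Mittag-Leffler function, and the elementary evaluation $E_{1,1}(z)=e^{z}$. Throughout, the constant $C=\left({}_{0^{+}}I_{t}^{(1-\nu)(1-\mu)}p\right)(0^{+})$ produced by the Laplace inversion in Theorem~\ref{thm:Cobweb-Demand} has to be tracked, since it degenerates differently in the two limiting regimes.

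For item~1 I would set $\nu=1$. Then $\gamma=\mu+1-\mu=1$, so $t^{\gamma-1}\equiv 1$ and $E_{\mu,\gamma}=E_{\mu,1}=E_{\mu}$; moreover $(1-\nu)(1-\mu)=0$, whence $C=\left({}_{0^{+}}I_{t}^{0}p\right)(0^{+})=p(0^{+})=:C_{0}$ is an ordinary initial price. Substituting into \eqref{eqn:Cobweb-demand-solution} and collecting the two copies of $E_{\mu,1}(\lambda t^{\mu})$ gives $p(t)=\bigl(C_{0}+\tfrac{\xi}{\lambda}\bigr)E_{\mu,1}(\lambda t^{\mu})-\tfrac{\xi}{\lambda}$, which is the Caputo solution of \cite{Cobweb-Caputo}. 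To reach the classical model I would then take $\mu=1$ as well, use $E_{1,1}(\lambda t)=e^{\lambda t}$, and note that $-\tfrac{\xi}{\lambda}=\tfrac{\alpha_{1}-\alpha}{\beta-\beta_{1}}=p_{e}$, so that $p(t)=(C_{0}-p_{e})e^{\lambda t}+p_{e}$, the integer-order expression of \cite{Cobweb-book}.

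For item~2 I would set $\nu=0$. Then $\gamma=\mu$, so $t^{\gamma-1}=t^{\mu-1}$ and $E_{\mu,\gamma}=E_{\mu,\mu}$, while $(1-\nu)(1-\mu)=1-\mu$ forces $C=\left({}_{0}I_{t}^{1-\mu}p\right)(0^{+})=:C_{1}$, exactly the fractional initial datum appropriate to a Riemann--Liouville problem. Plugging these into \eqref{eqn:Cobweb-demand-solution} gives $p(t)=C_{1}t^{\mu-1}E_{\mu,\mu}(\lambda t^{\mu})-\tfrac{\xi}{\lambda}+\tfrac{\xi}{\lambda}E_{\mu}(\lambda t^{\mu})$. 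Item~3 is not an analytic assertion but a bibliographic observation: among the fractional cobweb models in the literature, the conformable one of \cite{Cobweb-frac-eco} corresponds to the operator $T_{\alpha}$ and the Caputo one of \cite{Cobweb-Caputo} to $\nu=1$, so neither covers the choice $\nu=0$; hence the Riemann--Liouville cobweb model obtained here is new, and it suffices to record this.

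I do not expect a genuine obstacle here: every step is a term-by-term substitution. The one point that warrants care is the bookkeeping of the constant $C$ --- verifying that $(1-\nu)(1-\mu)$ really does vanish when $\nu=1$ and equals $1-\mu$ when $\nu=0$, so that $C$ specialises to $p(0^{+})$ and to $\left({}_{0}I_{t}^{1-\mu}p\right)(0^{+})$ respectively --- together with checking that the collapsed constants ($C_{0}+\xi/\lambda$, and the factor $-\xi/\lambda=p_{e}$) agree verbatim with the formulas in \cite{Cobweb-Caputo} and \cite{Cobweb-book}. Everything else follows at once from Theorem~\ref{thm:Cobweb-Demand} and the series definition of $E_{\mu,\gamma}$.
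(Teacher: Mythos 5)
Your proposal is correct and follows essentially the same route as the paper, which presents this lemma as a direct substitution of $\nu=1$ (giving $\gamma=1$) and $\nu=0$ (giving $\gamma=\mu$) into the solution \eqref{eqn:Cobweb-demand-solution} with the two Mittag-Leffler terms collected in the Caputo case; your extra bookkeeping of how $C=\left({}_{0^{+}}I_{t}^{(1-\nu)(1-\mu)}p\right)(0^{+})$ degenerates to $p(0^{+})$ and to $\left({}_{0}I_{t}^{1-\mu}p\right)(0^{+})$ is exactly the point the paper leaves implicit. The only slip is bibliographic, not mathematical: the conformable cobweb model is the reference labelled \cite{Cobweb-frac-Con}, not \cite{Cobweb-frac-eco}.
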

\subsection{Hilfer derivative in the supply function}
This section considers the cobweb model with Hilfer fractional derivative in the supply function of the form:
 \begin{align}
 \label{eqn:Cobweb-Hilfer-supply}
\left\{
  \begin{array}{ll}
 D_{t}=\alpha+\beta p(t)\\
S_{t}=\alpha_{1}+\beta_{1}(p(t)+\delta\cdot D_{0+}^{\mu,\nu}(p)(t))\\
D(t)=S(t).
\end{array}
\right.
\end{align}
where $D_{0+}^{\mu,\nu}(p)(t)$ is the Hilfer fractional derivative with $ \alpha, \beta, \alpha_{1}, \beta_{1},\delta \in \mathbb{R}, \beta\neq 0, \beta\neq\beta_{1}$ and $0<\mu\leq 1, 0\leq\nu\leq 1$.
\begin{theorem}
\label{thm:Cobweb-Supply}
The unique solution of the cobweb model {\rm{(\ref{eqn:Cobweb-Hilfer-supply})}} with Hilfer derivative in the supply function is given by
\begin{align}
\label{eqn:Cobweb-supply-solution}
p(t)=Ct^{\gamma-1}E_{\mu,\gamma}(\varrho t^{\mu})-\dfrac{\eta}{\varrho}+\dfrac{\eta}{\varrho}E_{\mu}(\varrho t^{\mu}).
\end{align}
Here $\varrho=\dfrac{\beta_{1}-\beta}{\delta\beta},\enspace \eta= \dfrac{\alpha_{1}-\alpha}{\delta\beta}, \enspace C\in \mathbb{R}$.
\end{theorem}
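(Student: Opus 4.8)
The plan is to reproduce, mutatis mutandis, the Laplace-transform argument used for Theorem \ref{thm:Cobweb-Demand}. First I would impose the market-clearing condition $D(t)=S(t)$ on (\ref{eqn:Cobweb-Hilfer-supply}), which reads $\alpha+\beta p(t)=\alpha_{1}+\beta_{1}p(t)+\beta_{1}\delta\,D_{0+}^{\mu,\nu}p(t)$. Collecting like terms and dividing through by the coefficient of the fractional derivative — here the hypotheses $\beta\neq 0$, $\beta\neq\beta_{1}$ together with $\delta\neq 0$ are what guarantee that this coefficient, and hence $\varrho$, is well defined and nonzero — reduces the system to the scalar fractional differential equation
\begin{align*}
D_{0+}^{\mu,\nu}p(t)=\varrho\,p(t)+\eta,
\end{align*}
with $\varrho,\eta$ as in the statement. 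This equation is formally identical to (\ref{eqn:equlibrium-demand}) with $\lambda,\xi$ replaced by $\varrho,\eta$.

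Next I would Laplace-transform both sides and insert the rule (\ref{eqn:Hilfer-Laplace}) for the Hilfer derivative, giving
\begin{align*}
s^{\mu}\mathcal{L}\{p(t)\}(s)-s^{\nu(\mu-1)}\left({}_{0^{+}}I^{(1-\nu)(1-\mu)}_{t}p\right)(0^{+})=\varrho\,\mathcal{L}\{p(t)\}(s)+\dfrac{\eta}{s}.
\end{align*}
Solving this linear relation for $\mathcal{L}\{p(t)\}(s)$ and writing $C=\left({}_{0^{+}}I^{(1-\nu)(1-\mu)}_{t}p\right)(0^{+})$ yields
\begin{align*}
\mathcal{L}\{p(t)\}(s)=\dfrac{C\,s^{\nu(\mu-1)}}{s^{\mu}-\varrho}+\dfrac{\eta}{s(s^{\mu}-\varrho)}.
\end{align*}
Then, applying Lemma \ref{lem:Cobweb:Lap-Mittag} to identify the first term as the transform of $t^{\gamma-1}E_{\mu,\gamma}(\varrho t^{\mu})$ and the second as $-\tfrac{\eta}{\varrho}$ times the transform of $1-E_{\mu}(\varrho t^{\mu})$, and inverting, I obtain precisely the claimed formula (\ref{eqn:Cobweb-supply-solution}).

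For uniqueness I would run the converse verification exactly as in the demand case: starting from $p(t)=Ct^{\gamma-1}E_{\mu,\gamma}(\varrho t^{\mu})-\tfrac{\eta}{\varrho}+\tfrac{\eta}{\varrho}E_{\mu}(\varrho t^{\mu})$, apply $D_{0+}^{\mu,\nu}$ termwise using the differentiation identities of Lemma \ref{lem:Cobweb:Lap-Mittag}, and check that the result collapses to $\varrho\,p(t)+\eta$, so that $p$ satisfies (\ref{eqn:Cobweb-Hilfer-supply}) under the equilibrium constraint. I do not anticipate any genuine obstacle, since the whole argument is a transcription of the proof of Theorem \ref{thm:Cobweb-Demand} under the substitution $\lambda\mapsto\varrho$, $\xi\mapsto\eta$. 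The only place that needs care is the algebraic reduction step — keeping the signs straight and correctly tracking which of $\beta$, $\beta_{1}$, $\delta$ survive in the denominators of $\varrho$ and $\eta$ — and confirming $\varrho\neq 0$ so that the division by $\varrho$ in the solution formula is legitimate.
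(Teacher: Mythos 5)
Your proposal follows the paper's proof essentially verbatim: reduce the market-clearing condition to the scalar equation $D_{0+}^{\mu,\nu}p=\varrho p+\eta$, Laplace-transform using (\ref{eqn:Hilfer-Laplace}), invert via Lemma \ref{lem:Cobweb:Lap-Mittag}, and verify the converse termwise. However, the one step you explicitly deferred --- ``keeping the signs straight and correctly tracking which of $\beta$, $\beta_{1}$, $\delta$ survive in the denominators'' --- is precisely the step that matters. The coefficient of the Hilfer derivative in $\alpha+\beta p(t)=\alpha_{1}+\beta_{1}p(t)+\beta_{1}\delta\,D_{0+}^{\mu,\nu}p(t)$ is $\beta_{1}\delta$, so the reduction requires $\beta_{1}\neq 0$ and $\delta\neq 0$ (not $\beta\neq 0$, which plays no role in this division), and carrying it out gives
\begin{align*}
\varrho=\frac{\beta-\beta_{1}}{\delta\beta_{1}},\qquad \eta=\frac{\alpha-\alpha_{1}}{\delta\beta_{1}},
\end{align*}
rather than the $\varrho=(\beta_{1}-\beta)/(\delta\beta)$, $\eta=(\alpha_{1}-\alpha)/(\delta\beta)$ asserted in the theorem statement and copied into your reduction. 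The ratio $-\eta/\varrho=(\alpha_{1}-\alpha)/(\beta-\beta_{1})$ is the same under either convention, which is why the equilibrium value in Theorem \ref{thm:supply-stability} comes out right, but the individual constants differ; note that the paper's own Example 2 (with $\beta=-4$, $\beta_{1}=2$, $\delta=3$) reports $\varrho=-1$ and $\eta=15$, consistent with $(\beta-\beta_{1})/(\delta\beta_{1})$ and not with $(\beta_{1}-\beta)/(\delta\beta)=-1/2$. So your argument, as written, asserts without verification that the reduction produces the stated constants, and that assertion is false as stated. Everything downstream of the reduction --- the Laplace inversion, the identification via Lemma \ref{lem:Cobweb:Lap-Mittag}, and the converse check --- is correct and identical in structure to the paper's proof of Theorem \ref{thm:Cobweb-Demand}.
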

\begin{proof}
The proof of the theorem is similar to the proof of the Theorem \ref{thm:Cobweb-Demand}. Simplification of (\ref{eqn:Cobweb-Hilfer-supply}) results in
\begin{align}
\label{eqn:equlibrium-supply}
D_{0+}^{\mu,\nu}(p)(t)=\varrho p(t)+\eta.
\end{align}
Further, Laplace transform method is applied in the same way to get,
\begin{align*}
s^{\mu}\mathcal{L}\{p(t)\}(s)-s^{\nu(\mu-1)}\left({}_{0^{+}}I^{(1-\nu)(1-\mu)}_{t}p\right)(0^{+})=\varrho\mathcal{L}\{p(t)\}(s)+\dfrac{\eta}{s}
\end{align*}
where $\varrho=\dfrac{\beta_{1}-\beta}{\delta\beta},\enspace \eta= \dfrac{\alpha_{1}-\alpha}{\delta\beta}$.
Separating the like terms gives,
\begin{align*}
\Rightarrow \mathcal{L}\{p(t)\}(s)=\dfrac{s^{\nu(\mu-1)}\left({}_{0^{+}}I^{(1-\nu)(1-\mu)}_{t}p\right)(0^{+})}{s^{\mu}-\varrho}+\dfrac{\eta}{s(s^{\mu}-\varrho)}.
\end{align*}
On substituting $\left({}_{0^{+}}I^{(1-\nu)(1-\mu)}_{t}p\right)(0^{+})=C$ and using the Lemma \ref{lem:Cobweb:Lap-Mittag} gives,
\begin{align}
\label{eqn:Cobweb-supply-Laplace}
\mathcal{L}\{p(t)\}(s)=C\mathcal{L}\{t^{\gamma-1}E_{\mu,\gamma}(\varrho t^{\mu})\}(s)-\dfrac{\eta}{\varrho}\mathcal{L}\{1-E_{\mu}(\varrho t^{\mu})\}(s).
\end{align}
Taking inverse Laplace transform of (\ref{eqn:Cobweb-supply-Laplace}) gives
\begin{align*}
p(t)=C t^{\gamma-1}E_{\mu,\gamma}(\varrho t^{\mu})-\dfrac{\eta}{\varrho}+\dfrac{\eta}{\varrho}E_{\mu}(\varrho t^{\mu}),
\end{align*}
which is the solution of the cobweb model with Hilfer fractional derivative.

The converse can be proved in a similar manner as in Theorem \ref{thm:Cobweb-Demand} and we omit details. The proof is complete.
\end{proof}
Stability conditions for which the solution converges to the equilibrium value $p_{e}$ is given in the subsequent theorem.
\begin{theorem}
\label{thm:supply-stability}
Assume $\dfrac{\beta_{1}}{\beta}<1$. Then the solution of {\rm{(\ref{eqn:Cobweb-Hilfer-supply})}} converges to the equilibrium value $p_{e}$. The equilibrium value $p_{e}$ according to Gandolfo {\rm{\cite{Cobweb-book}}} is given as $\dfrac{\alpha_{1}-\alpha}{\beta-\beta_{1}}$ .
\end{theorem}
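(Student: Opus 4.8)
The plan is to mirror, line for line, the proof of Theorem~\ref{thm:demand-stability}, now applied to the closed-form solution~\eqref{eqn:Cobweb-supply-solution} furnished by Theorem~\ref{thm:Cobweb-Supply}. First I would pin down the sign of the growth coefficient $\varrho$. Writing $\varrho=\dfrac{\beta_{1}-\beta}{\delta\beta}=\dfrac{1}{\delta}\,\dfrac{\beta_{1}-\beta}{\beta}$, the hypothesis $\dfrac{\beta_{1}}{\beta}<1$ forces $\dfrac{\beta_{1}-\beta}{\beta}<0$, so that $\varrho<0$ provided $\delta>0$. Under this sign condition, for $0<\mu\le 1$ we have $\varrho t^{\mu}\to-\infty$ as $t\to+\infty$, which is exactly the regime in which the Mittag-Leffler asymptotics apply.

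Next I would take the limit $t\to+\infty$ termwise in~\eqref{eqn:Cobweb-supply-solution}. By Remark~\ref{rem:Mittag-zero} (built on Lemmas~\ref{lem:one-mittag} and~\ref{lem:two-mittag}), $\displaystyle\lim_{t\to+\infty}E_{\mu}(\varrho t^{\mu})=0$, so the term $\dfrac{\eta}{\varrho}E_{\mu}(\varrho t^{\mu})$ vanishes. For the term $Ct^{\gamma-1}E_{\mu,\gamma}(\varrho t^{\mu})$ I would distinguish the same two cases as in Theorem~\ref{thm:demand-stability}: if $\nu=1$ then $\gamma=\mu+\nu-\mu\nu=1$ and the term reduces to $C\,E_{\mu,1}(\varrho t^{\mu})=C\,E_{\mu}(\varrho t^{\mu})\to 0$; if $0<\gamma<1$ then $t^{\gamma-1}\to 0$ while $E_{\mu,\gamma}(\varrho t^{\mu})\to 0$ again by Remark~\ref{rem:Mittag-zero}, so the product tends to $0$. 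Hence $\displaystyle\lim_{t\to+\infty}p(t)=-\dfrac{\eta}{\varrho}$.

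Finally I would identify $-\dfrac{\eta}{\varrho}$ with the equilibrium price. Substituting $\eta=\dfrac{\alpha_{1}-\alpha}{\delta\beta}$ and $\varrho=\dfrac{\beta_{1}-\beta}{\delta\beta}$, the common factor $\delta\beta$ cancels and $-\dfrac{\eta}{\varrho}=-\dfrac{\alpha_{1}-\alpha}{\beta_{1}-\beta}=\dfrac{\alpha_{1}-\alpha}{\beta-\beta_{1}}=p_{e}$, the value recorded by Gandolfo~\cite{Cobweb-book}, which finishes the proof.

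The computation itself is routine; the only step requiring genuine attention is the sign analysis in the first paragraph. Unlike the demand case---where $\lambda=\dfrac{\beta_{1}-\beta}{\beta}$ is negative as soon as $\dfrac{\beta_{1}}{\beta}<1$---here the extra parameter $\delta$ sits in the denominator, and $\varrho<0$ fails when $\delta<0$ (in which case $E_{\mu}(\varrho t^{\mu})$ grows without bound and the solution diverges). I would therefore state explicitly at the outset that $\delta>0$ is assumed, i.e. that $\dfrac{\beta_{1}-\beta}{\delta\beta}<0$; with that in place the rest of the argument carries over verbatim.
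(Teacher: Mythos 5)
Your proposal follows essentially the same route as the paper: sign analysis of $\varrho$, termwise limits via Remark~\ref{rem:Mittag-zero}, and cancellation of $\delta\beta$ in $-\eta/\varrho$ to recover $p_{e}$. Your explicit flagging of the hypothesis $\delta>0$ is a genuine improvement rather than a deviation --- the paper's own proof silently writes $\varrho=\dfrac{\beta_{1}-\beta}{\beta}$ (dropping the $\delta$ from the definition in Theorem~\ref{thm:Cobweb-Supply}) in order to conclude $\varrho<0$ from $\dfrac{\beta_{1}}{\beta}<1$, and as you correctly observe that implication fails for $\delta<0$.
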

\begin{proof}
In Theorem \ref{eqn:Cobweb-Hilfer-supply}, it is assumed that $\varrho=\dfrac{\beta_{1}-\beta}{\beta}$. It is presumed in the theorem statement that $\dfrac{\beta_{1}}{\beta}<1\Rightarrow \varrho<0$, and hence for $0<\mu<1$, $\varrho t^{\mu}\rightarrow -\infty$ as $t\rightarrow +\infty$. With reference to the Remark \ref{rem:Mittag-zero}, when $ t\rightarrow \infty$,
\begin{align}
\label{eqn:Cobweb-Mittag-limiting-s}
\displaystyle \lim_{t\rightarrow \infty}E_{\mu}(\varrho t^{\mu})=0.
\end{align}
 Now apply limit on both sides of the solution (\ref{eqn:Cobweb-supply-solution}). The limiting value converges to the equilibrium value $p_{e}$ which is given by Gandolfo \cite{Cobweb-book}. That is,
 \begin{align*}
 \displaystyle \lim_{t\rightarrow +\infty}p(t)=\displaystyle \lim_{t\rightarrow +\infty}\left[Ct^{\gamma-1}E_{\mu,\gamma}(\varrho t^{\mu})-\dfrac{\eta}{\varrho}+\dfrac{\eta}{\varrho}E_{\mu}(\varrho t^{\mu})\right].
 \end{align*}
For all the values of $0<\gamma\leq 1$, the first and the last term of the solution (\ref{eqn:Cobweb-supply-solution}) vanishes as $t\rightarrow \infty$. Hence $\displaystyle \lim_{t\rightarrow +\infty}p(t)=-\dfrac{\eta}{\varrho}=\dfrac{\alpha_{1}-\alpha}{\beta-\beta_{1}}$.
It can be concluded  thereby, $\displaystyle \lim_{t\rightarrow +\infty}p(t)=p_{e}$.
\end{proof}
\begin{corollary}
\begin{enumerate}
\item
The results can be compared in the same way as in Lemma \ref{Cobweb-Demand-compare} and it is evident that the solution of the cobweb model with Caputo and Riemann-Liouville derivative in the demand and supply function are the particular case of the solution of the cobweb model with Hilfer derivative.
\item
The basic integer order cobweb model can be derived as a special case.
\end{enumerate}
\end{corollary}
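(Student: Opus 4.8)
The plan is to repeat, on the supply-side solution (\ref{eqn:Cobweb-supply-solution}), the same specialization argument already used for the demand case in Lemma~\ref{Cobweb-Demand-compare}. The only structural difference between the two solutions is the replacement $\lambda\mapsto\varrho=\tfrac{\beta_{1}-\beta}{\delta\beta}$ and $\xi\mapsto\eta=\tfrac{\alpha_{1}-\alpha}{\delta\beta}$, so every step transfers verbatim. Throughout I would use the parameter relation $\gamma=\mu+\nu-\mu\nu$ and the reductions recalled in Section~2, namely $D_{0+}^{\mu,\nu}={}^{C}D_{0+}^{\mu}$ when $\nu=1$ and $D_{0+}^{\mu,\nu}={}^{L}D_{0+}^{\mu}$ when $\nu=0$.

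For item~(1), I would first set $\nu=1$, so that $\gamma=1$; then $t^{\gamma-1}=1$ and $E_{\mu,\gamma}=E_{\mu,1}=E_{\mu}$, and the first and third terms of (\ref{eqn:Cobweb-supply-solution}) combine to give $p(t)=\bigl(C+\tfrac{\eta}{\varrho}\bigr)E_{\mu}(\varrho t^{\mu})-\tfrac{\eta}{\varrho}$, which is the Caputo-in-supply solution corresponding to Chen et al.~\cite{Cobweb-Caputo}. Next I would set $\nu=0$, so that $\gamma=\mu$; here the constant produced by Laplace inversion must be reinterpreted as $C_{1}=\bigl({}_{0}I_{t}^{1-\mu}p\bigr)(0^{+})$, since $(1-\nu)(1-\mu)=1-\mu$, giving $p(t)=C_{1}t^{\mu-1}E_{\mu,\mu}(\varrho t^{\mu})-\tfrac{\eta}{\varrho}+\tfrac{\eta}{\varrho}E_{\mu}(\varrho t^{\mu})$, the (new) Riemann--Liouville-in-supply solution. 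Applying the same two substitutions to (\ref{eqn:Cobweb-demand-solution}) reproduces the demand-side statements of Lemma~\ref{Cobweb-Demand-compare}, so item~(1) follows by inspection.

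For item~(2), I would take the Caputo specialization above and further set $\mu=1$. Then $E_{1,1}(z)=e^{z}$, so $p(t)=\bigl(C+\tfrac{\eta}{\varrho}\bigr)e^{\varrho t}-\tfrac{\eta}{\varrho}$; since $-\tfrac{\eta}{\varrho}=\tfrac{\alpha_{1}-\alpha}{\beta-\beta_{1}}=p_{e}$ and $C+\tfrac{\eta}{\varrho}$ is fixed by the initial price as $p_{0}-p_{e}$, this matches the classical cobweb expression of Gandolfo~\cite{Cobweb-book}. The step that needs the most care — and the one I expect to be the main obstacle — is precisely this last identification: Gandolfo's model is a genuine one-period-lag recursion with geometric solution $(p_{0}-p_{e})(b_{1}/b)^{t}+p_{e}$, whereas the $\mu=1$ reduction yields a continuous-time exponential; making the correspondence rigorous requires spelling out how the $\mu=1$ derivative term encodes the unit lag, and at the very least matching the equilibrium $p_{e}$ and the stability threshold $\tfrac{\beta_{1}}{\beta}<1$ rather than asserting literal equality of the two trajectories.
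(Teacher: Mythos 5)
Your proof is correct and follows exactly the route the paper intends: the corollary is stated with no proof beyond the pointer to Lemma~\ref{Cobweb-Demand-compare}, and your substitutions $\nu=1$ (so $\gamma=1$), $\nu=0$ (so $\gamma=\mu$), and then $\mu=1$ simply rerun that lemma's argument with $\lambda,\xi$ replaced by $\varrho,\eta$. Your closing caveat --- that the $\mu=1$ reduction yields a continuous-time exponential rather than Gandolfo's discrete geometric recursion, so the identification should be understood as matching the equilibrium $p_{e}$ and the stability threshold $\tfrac{\beta_{1}}{\beta}<1$ rather than literal equality of trajectories --- is a legitimate point that the paper silently glosses over.
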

\section{Graphical Illustration}
In this section, two examples that appeared in the paper \cite{Cobweb-Caputo} by Chen et al. are considered for comparative analysis. It is discussed case by case for detailed understanding.
\begin{example}
A basic cobweb model with Hilfer fractional derivative in the demand function is considered.
\begin{align*}
D(t)=&40-10(p(t)+D_{0+}^{\mu,\nu}p(t)),\\
S(t)=&2+9p(t),\\
D(t)=&S(t).
\end{align*}
Let $p_{0}=20$. The solution for various values of $\mu$ and $\nu$ has to be observed and the condition for the solution to converge to the equilibrium value has to be verified.
Comparing the given model with the basic model, it can be noticed that $\alpha=40$; $\beta=-10$; $\alpha_{1}=2$; $\beta_{1}=9$. $\lambda$ and $\xi$ can be found by  $\lambda=\dfrac{\beta_{1}-\beta}{\beta},\enspace \xi= \dfrac{\alpha_{1}-\alpha}{\beta}$. The given model satisfies the stability condition $\dfrac{\beta_{1}}{\beta}<1$ as given by the Theorem \ref{thm:demand-stability}. And $\lambda=-1.9$ and $\xi= 3.8$ and the equilibrium value is $p_{e}$= 2. Now various cases can be discussed to check how the trajectory of the solution converges to the equilibrium value.
\begin{enumerate}
\item[] Case 1:- When $\mu=0.1$; and the value of $\nu$ is varied from $0$ to $1$ with $0.2$ interpolation. The values are given in a tabular form as well as graphically.
  \begin{table}[H]
  \begin{center}
    \caption{Value of $p(t)$ for $\mu=0.1$ and various values of $\nu$}
    \label{tab:table1}
    \begin{tabular}{|c|c|c|c|c|c|} 
    \hline
     \textbf{t} & \textbf{100} & \textbf{1000}& \textbf{10000}& \textbf{$p_{e}$}\\
      \hline
      $\nu$=0    & 1.64985 & 1.69042 & 1.73129 &2\\
      $\nu$=0.2 & 3.00988 & 2.82498 & 2.66983 &2\\
      $\nu$=0.4 & 3.95831 & 3.6185   & 3.32796 &2\\
      $\nu$=0.6 & 5.08315 & 4.5619   & 4.11206 &2\\
      $\nu$=0.8 & 5.91011 & 5.258     & 4.69247 &2\\
      $\nu$=1    & 6.28175 & 5.57406 & 4.95832 &2\\
      \hline
      \end{tabular}
  \end{center}
\end{table}
Symbolically from Figure 1 it is evident that cobweb model with Riemann-Liouville derivative (for $\nu=0$ represented by dotted lines) in the demand function converges not exactly to its equilibrium value, but for higher values of $t$, it is getting closer. But the solution for the model with Caputo derivative(for $\nu=1$) in the demand function has huge variation even for higher values of $t$.
\begin{figure}[H]		
	\begin{subfigure}[b]{0.45\textwidth}
		\centering
		\includegraphics{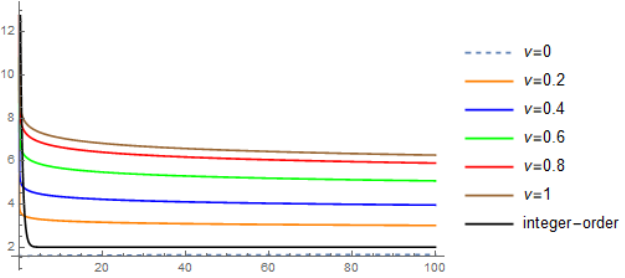}
		\caption{$\mu=0.1$ and $t$ ranges from $0-100$}
	\end{subfigure}
	\begin{subfigure}[b]{0.4\textwidth}
		\centering
		\includegraphics{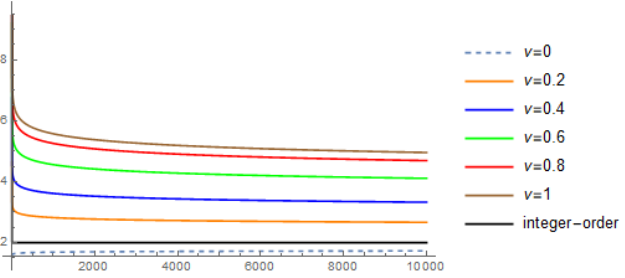}
		\caption{$\mu=0.1$ and $t$ ranges from $0-10000$}
	\end{subfigure}
 \caption{}
\end{figure}
\item[] Case 2:- When $\mu=0.5$ and the value of $\nu$ is varied from $0$ to $1$ with $0.2$ interpolation. The values are given in a tabular form as well as graphically.
  \begin{table}[H]
  \begin{center}
    \caption{Value of $p(t)$ for $\mu=0.5$ and various values of $\nu$}
    \label{tab:table1}
    \begin{tabular}{|c|c|c|c|c|c|} 
    \hline
     \textbf{t} & \textbf{100} & \textbf{1000}& \textbf{10000}& \textbf{$p_{e}$}\\
      \hline
      $\nu$=0    & 1.95826 &1.98299 & 1.99424  &2\\
      $\nu$=0.2 & 2.0818 & 2.02232  & 2.0067  &2\\
      $\nu$=0.4 & 2.20983 &2.06342 & 2.01975 &2\\
      $\nu$=0.6 & 2.33351 &2.10342& 2.03249  &2\\
      $\nu$=0.8 & 2.44412 &2.13948 & 2.044  &2\\
      $\nu$=1    & 2.5338 & 2.1690 &2.0534  &2\\
      \hline
      \end{tabular}
  \end{center}
\end{table}
Graphically from Figure 2 it is evident that cobweb model with Riemann-Liouville derivative (for $\nu=0$ represented by dotted lines) in the demand function attains stability much faster than the model with Caputo derivative(for $\nu=1$) in the demand function.
\begin{figure}[H]		
	\begin{subfigure}[b]{0.45\textwidth}
		\centering
		\includegraphics{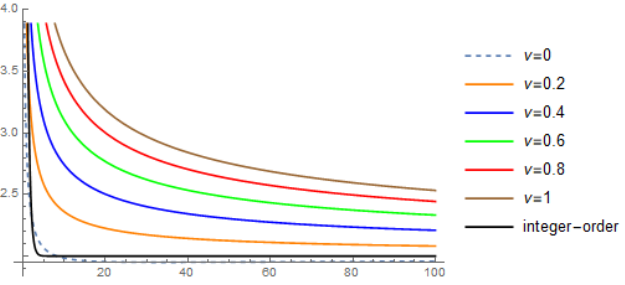}
		\caption{$\mu=0.5$ and $t$ ranges from $0-100$}
	\end{subfigure}
	\begin{subfigure}[b]{0.4\textwidth}
		\centering
		\includegraphics{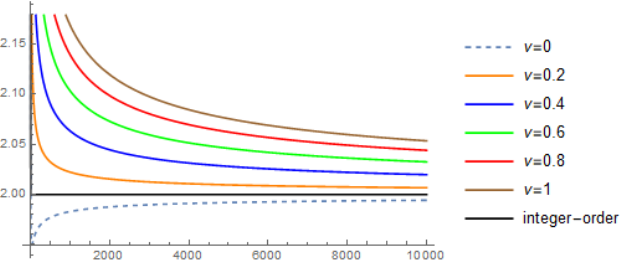}
		\caption{$\mu=0.5$ and $t$ ranges from $0-10000$}
	\end{subfigure}
 \caption{}
\end{figure}
\item[] Case 3:- When $\mu=0.9$; and the value of $\nu$ is varied from $0$ to $1$ with $0.2$ interpolation. The values are given in a tabular form as well as graphically.
  \begin{table}[h!]
  \begin{center}
    \caption{Value of $p(t)$ for $\mu=0.5$ and various values of $\nu$}
    \label{tab:table1}
    \begin{tabular}{|c|c|c|c|c|c|} 
    \hline
     \textbf{t} & \textbf{100} & \textbf{1000}& \textbf{10000}& \textbf{$p_{e}$}\\
      \hline
      $\nu$=0    & 1.99836 &1.99978 & 1.99997 &2\\
      $\nu$=0.2 & 2.0019  & 2.00022  & 2.00003  &2\\
      $\nu$=0.4 & 2.00545 &2.00067 & 2.00008 &2\\
      $\nu$=0.6 & 2.00899 &2.00111 & 2.00014  &2\\
      $\nu$=0.8 & 2.01251 &2.00155 & 2.0002 &2\\
      $\nu$=1    & 2.01601& 2.00199 &2.00025  &2\\
      \hline
      \end{tabular}
  \end{center}
\end{table}
The illustrative given in Figure 3-(A) outlines how Riemann and Caputo converges to the equlibrium value. Also of Figure 3-(B), clearly spells out that for higher values $\mu$ and lower values of $\nu$, the solution converges to the equlibrium value.
\begin{figure}[ht!]		
	\begin{subfigure}[b]{0.45\textwidth}
		\centering
		\includegraphics{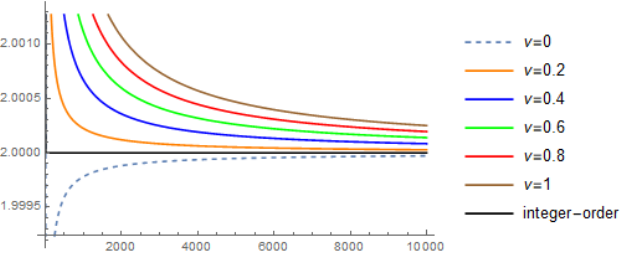}
		\caption{$\mu=0.9$ and $t$ ranges from $0-10000$}
	\end{subfigure}
	\begin{subfigure}[b]{0.4\textwidth}
		\centering
		\includegraphics{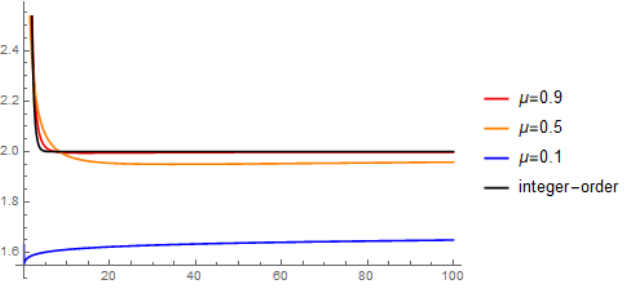}
		\caption{$\nu=0$ and various values of $\mu$}
	\end{subfigure}
 \caption{}
\end{figure}
\end{enumerate}
\end{example}
\begin{example}
A basic cobweb model with Hilfer fractional derivative in the supply function is considered.
\begin{align*}
D(t)=&80-4p(t),\\
S(t)=&-10+2(p(t)+3D_{0+}^{\mu,\nu}p(t)),\\
D(t)=&S(t).
\end{align*}
Let $p_{0}=20$. The solution for various values of $\mu$ and $\nu$ has to be observed and the condition for the solution to converge to the equilibrium value has to be verified.

Comparing the given model with the basic model, it can be noticed that $\alpha=80$; $\beta=-4$; $\alpha_{1}=-10$; $\beta_{1}=2$. It satisfies the stability condition $\dfrac{\beta_{1}}{\beta}<1$ given by the Theorem \ref{thm:supply-stability}. Moreover, $\varrho$ and $\eta$ can be found by  $\varrho=\dfrac{\beta_{1}-\beta}{\delta\beta},\enspace \eta= \dfrac{\alpha_{1}-\alpha}{\delta\beta}$. That is, $\varrho=-1$ and $\eta= 15$ and the equilibrium value is $p_{e}=15$. Now various cases can be discussed to check how the trajectory of the solution converges in models having Caputo and Riemann derivatives in the supply function and how they converge to the equilibrium value.
\begin{enumerate}
\item [] Case 1:- When $\mu=0.1$; and the value of $\nu$ is varied from $0$ to $1$ with $0.2$ interpolation. The values are given in a tabular form as well as graphically.
  \begin{table}[h!]
  \begin{center}
    \caption{Value of $p(t)$ for $\mu=0.1$ and various values of $\nu$}
    \label{tab:table1}
    \begin{tabular}{|c|c|c|c|c|c|} 
    \hline
     \textbf{t} & \textbf{100} & \textbf{1000}& \textbf{10000}& \textbf{$p_{e}$}\\
      \hline
      $\nu$=0    & 9.71582 & 10.4198 & 11.0785 &15\\
      $\nu$=0.2 & 11.8507 & 12.2544 & 12.6367 &15\\
      $\nu$=0.4 & 13.325 & 13.526   & 13.7206 &15\\
      $\nu$=0.6 & 15.0592 & 15.0266  & 15.0034 &15\\
      $\nu$=0.8 & 16.3184 &16.1214   &15.9434 &15\\
      $\nu$=1    & 16.8641 & 16.6027 & 16.3621 &15\\
      \hline
      \end{tabular}
  \end{center}
\end{table}
\item [] Case 2:- When $\mu=0.5$; and the value of $\nu$ is varied from $0$ to $1$ with $0.2$ interpolation. The values are given in a tabular form as well as graphically.
\begin{figure}[ht!]		
	\begin{subfigure}[b]{0.45\textwidth}
		\centering
		\includegraphics{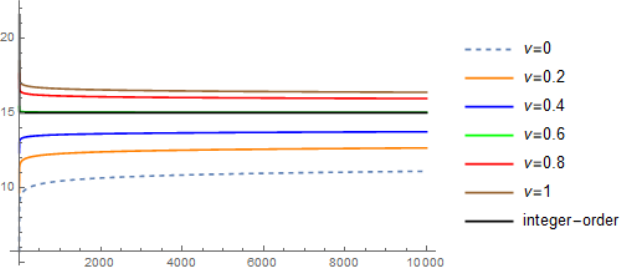}
		\caption{$\mu=0.1$ and $t$ ranges from $0-10000$}
	\end{subfigure}
	\begin{subfigure}[b]{0.4\textwidth}
		\centering
		\includegraphics{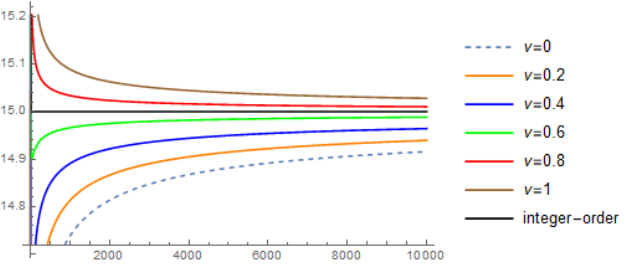}
		\caption{$\mu=0.5$ and $t$ ranges from $0-10000$}
	\end{subfigure}
 \caption{}
\end{figure}
Graphical representation shows that the both fractional order Riemann-Lioville (for $\nu=0$) and Caputo (for $\nu=1$) when compared to integer valued model, has a considerable deviation when $\mu=0.1$ and $\mu=0.5$. Also only for higher values of $t$, the fractional order trajectories are only close to the equilibrium trajectory.

\item [] Case 3:- When $\mu=0.9$; and the value of $\nu$ is varied from $0$ to $1$ with $0.2$ interpolation. The values are given in a tabular form as well as graphically.
  \begin{table}[ht!]
  \begin{center}
    \caption{Value of $p(t)$ for $\mu=0.5$ and various values of $\nu$}
    \label{tab:table1}
    \begin{tabular}{|c|c|c|c|c|c|} 
    \hline
     \textbf{t} & \textbf{100} & \textbf{1000}& \textbf{10000}& \textbf{$p_{e}$}\\
      \hline
      $\nu$=0    & 14.9747 &14.9967 & 14.9996 &15\\
      $\nu$=0.2 & 14.9816  & 14.9977 & 14.9997  &15\\
      $\nu$=0.4 & 14.9884 &14.9985 & 14.9998 &15\\
      $\nu$=0.6 & 14.9952 &14.9994 & 14.999 &15\\
      $\nu$=0.8 & 15.0019 &15.0002 & 15 &15\\
      $\nu$=1    & 15.0086&15.0011 &15.0001  &15\\
      \hline
      \end{tabular}
  \end{center}
\end{table}
The trajectory of the solution when ($\mu=0.9$) compared with the above two path, converges to the equilibrium trajectory much more closer. In fact, when $\mu=0.9$ and $\nu=0.8$, the solution of the given Hilfer fractional order model exactly coincides with the equilibrium value. This can be verified with of Figure 5-(A). The comparative analysis of the solution trajectory for various values of $\mu$ with $\nu=0.8$  can be observed in of Figure 5-(B)
\begin{figure}[ht!]		
	\begin{subfigure}[b]{0.45\textwidth}
		\centering
		\includegraphics{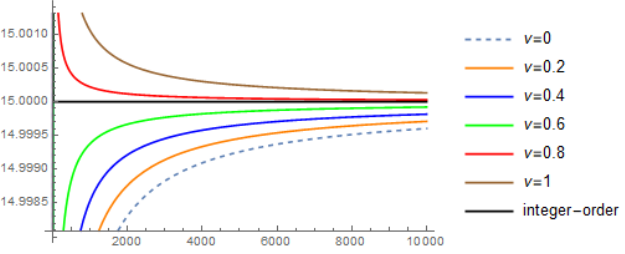}
		\caption{$\mu=0.9$ and $t$ ranges from $0-10000$}
	\end{subfigure}
	\begin{subfigure}[b]{0.4\textwidth}
		\centering
		\includegraphics{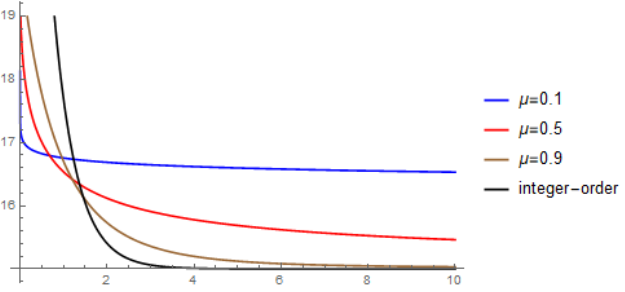}
		\caption{$\nu=0.8$ and various values of $\mu$}
	\end{subfigure}
 \caption{}
\end{figure}
\end{enumerate}
\end{example}
\section{Observation}
This paper provides an idea of how the fractional derivative values affects the trajectory of the solution. Even though Caputo and Riemman-Liouville derivatives are the conventional derivative used, it is always a perplexity of which derivative helps the solution attain stability faster and more closer to the equilibrium value. Solving for model having Hilfer derivative, shows the exact value of fractional order and exact derivative type, for which the system is stable. In the cobweb theory, this is among the few paper that deals with the fractional order and this is the only paper in the literature that considers Hilfer derivative in the cobweb model. This paper opens the scope of studying the various cobweb model with Hilfer fractional derivative including models with delay for future.

\end{document}